\theoremstyle{definition}
\newtheorem{definition}{Definition}[section]
\theoremstyle{plain}
\newtheorem{teo}[definition]{Theorem}
\newtheorem{prop}[definition]{Proposition}
\newtheorem{lem}[definition]{Lemma}
\newtheorem{prob}[definition]{Problem}
\newtheorem{claim}{Claim}
\newcommand{\numberset}{\mathbb}
\newcommand{\N}{\numberset{N}}
\newcommand{\Z}{\numberset{Z}}
\newcommand{\ca}{\mathcal}
\title{Highly edge-connected regular graphs without large factorizable subgraphs}
\author{Davide Mattiolo\thanks{Department of Physics, Informatics and Mathematics,
				University of Modena and Reggio Emilia, Via Campi 213/b, 41126 Modena, Italy. Email:~davide.mattiolo@unimore.it}, 	
Eckhard Steffen\thanks{Paderborn Center for Advanced Studies and Institute for Mathematics, Paderborn University, 
Warburger Str. 100, 33098 Paderborn, Germany. Email:~es@upb.de}}
\date{}
\begin{document}
	\maketitle
	
\begin{abstract}
We construct highly edge-connected $r$-regular graph which do not contain $r-2$ pairwise disjoint perfect matchings. 
The results partially answer a question stated by Thomassen \cite{Thomassen}.  
\end{abstract}

\section{Introduction}

We consider finite graphs which may have parallel edges but no loops. 
Let $r \geq 0$ be an integer. A graph $G$ is $r$-regular, if every vertex has degree $r$. For $1 \leq k \leq r$,
a graph $H$ is a $k$-factor of $G$, if $H$ is a spanning $k$-regular subgraph of $G$.  
Recently, Thomassen stated the following problem.

\begin{prob}[\cite{Thomassen}]\label{Prob:Thomassen}
Is every $r$-regular $r$-edge-connected graph of even order the union of $r-2$ $1$-factors and a $2$-factor?
\end{prob}

The statement is true for $r=3$. An $r$-regular graph $G$ is an $r$-graph, if $|\partial_G(S)| \geq r$ for every $S \subseteq V(G)$
with $|S|$ odd; $\partial_G(S)$ denotes the set of edges with precisely one end in $S$. We write $\partial(S)$ instead of $\partial_G(S)$ when it is clear the graph we are considering.

An $r$-graph is poorly matchable, if it
does not contain two disjoint 1-factors.
Clearly, every bridgeless cubic graph with edge chromatic number 4 is poorly matchable.  
Rizzi \cite{Rizzi} constructed poorly matchable $r$-graphs for each $r \geq 4$. All of them contain an edge of multiplicity $r-2$ and therefore, they have a 4-edge cut. However, the poorly matchable 4-graphs are 4-edge-connected and therefore, they provide a negative answer to Problem \ref{Prob:Thomassen}. The 4-graphs constructed in \cite{Mazz} are also poorly matchable. The following theorem is the main result of this note and it provides a negative answer to the question of Problem \ref{Prob:Thomassen} for every
positive integer which is a multiple of 4.

\begin{teo} \label{main1}
Let $t, r$ be positive integers and $r \geq 4$. 
There are infinitely many $t$-edge-connected $r$-graphs
which do not contain $r-2$ pairwise disjoint 1-factors, where \begin{itemize}
	\item $t=r$, if $r \equiv 0 \mod 4$;
	\item $t=r-1$, if $r \equiv 1 \mod 2$;
	\item $t=r-2$, if $r \equiv 2 \mod 4$.
\end{itemize}
\end{teo}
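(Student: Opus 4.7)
I would start by invoking an infinite family of poorly matchable, $4$-edge-connected $4$-graphs, whose existence is granted by the works of Rizzi and Mazzuoccolo cited in the introduction. Let $H$ denote such a graph.

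For $r \equiv 0 \pmod 4$, write $r = 4k$ and take $G$ to be the graph $kH$ obtained from $H$ by replacing every edge with $k$ parallel copies. Then $G$ is $r$-regular; since $|\partial_G(S)| = k\,|\partial_H(S)| \geq 4k = r$ for every $S$ with $|S|$ odd, $G$ is an $r$-graph, and the minimum edge cut has size $4k = r$, so $G$ is $r$-edge-connected. Infinitely many examples arise by letting $H$ range over the infinite family of base graphs. To show that $G$ contains no $r-2$ pairwise disjoint $1$-factors, I would exploit the projection $\pi\colon E(G)\to E(H)$ collapsing parallel copies: every $1$-factor of $G$ projects to a $1$-factor of $H$, and any two $1$-factors of $G$ that are disjoint in $G$ must project to $1$-factors of $H$ sharing at least one edge (else $H$ would contain two disjoint $1$-factors). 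The structural input from the $H$-side is that there exists a $1$-factor $F^{*}$ of $H$ whose removal leaves a cubic graph with no $1$-factor, hence (by Petersen's theorem) a bridge, producing a distinguished odd cut $\partial_H(A)$ of size exactly $4$ containing three edges of $F^{*}$. The goal is to combine the $k$-fold copy budget on the four edges of $\partial_H(A)$ with the cut-parity condition on each $1$-factor of $G$ to upper-bound the number of disjoint $1$-factors strictly below $r-2 = 4k-2$.

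For $r$ odd and for $r \equiv 2 \pmod 4$, the plan is to adjust $G = kH$ by gluing on a regularity-correcting $1$-factor or $2$-factor on $V(H)$ to reach exactly the required degree. This adjustment necessarily weakens the minimum edge cut by $1$ or $2$, producing graphs whose edge-connectivity matches the bounds $t = r-1$ and $t = r-2$ stated in the theorem. Non-factorizability transfers from $kH$ through the projection argument, since the added matching or cycle contributes only one or two predictable edges to any would-be $1$-factor and so cannot resolve the obstruction living inside the $kH$-part.

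The main obstacle is the counting step in the $r \equiv 0 \pmod 4$ case: the naive cut bound gives only at most $4k$ disjoint $1$-factors crossing $\partial_G(A)$, which is not strictly less than $4k-2$. The delicate part is to show that every $1$-factor of $H$ uses at least three edges of the distinguished $4$-cut $\partial_H(A)$ (so that at most $\lfloor 4k/3 \rfloor < 4k-2$ disjoint $1$-factors fit in $G$), or alternatively to derive an equivalently sharp bound by combining parity constraints from several odd cuts in $H$ together with the pairwise-intersection constraint on projections. Establishing this enhanced property of the base graph $H$, or choosing $H$ from a subfamily in which it automatically holds, is where I would expect to concentrate the technical effort.
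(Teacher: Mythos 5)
There is a genuine gap, and it sits exactly where you placed your ``main obstacle''. The enhanced property you hope to establish for the base graph $H$ --- that every $1$-factor of $H$ uses at least three edges of the distinguished odd $4$-cut $\partial_H(A)$ --- is provably false for any $4$-graph. By Edmonds' characterization of the perfect matching polytope, the vector $\frac14\mathbf{1}$ is a convex combination $\sum_i\lambda_i\chi^{M_i}$ of perfect matchings of $H$; for a tight odd cut ($|\partial_H(A)|=4$) this gives $\sum_i\lambda_i|M_i\cap\partial_H(A)|=1$ with each term at least $1$, so every $M_i$ in the support crosses the cut \emph{exactly once}. Hence perfect matchings crossing the cut once always exist, the bound $\lfloor 4k/3\rfloor$ never materializes, and the naive cut bound of $4k$ is all you get. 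More fundamentally, ``poorly matchable'' alone does not control the number of pairwise disjoint $1$-factors of $kH$: their projections form a multiset of pairwise-intersecting perfect matchings of $H$ with each edge used at most $k$ times, and a large pairwise-intersecting family whose pairwise intersections are spread over many distinct edges (exactly the configuration realized by the six perfect matchings of the Petersen graph) would defeat any such counting. Ruling this out requires detailed knowledge of the intersection pattern of \emph{all} perfect matchings of $H$, which is not available for Rizzi's or Mazzuoccolo's graphs. Your fallback (``combining parity constraints from several odd cuts'') is a hope, not an argument. The sketch for $r$ odd and $r\equiv 2\pmod 4$ inherits the same unproved core and adds a further unsubstantiated claim that the appended $1$- or $2$-factor ``cannot resolve the obstruction''.

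The paper's route is structurally different and is designed precisely to supply the missing control. It does not multiply a fixed $4$-graph uniformly; it builds gadgets $Q_k$ from two copies of $P+kM_0+(k-1)(M_1+M_3+M_4)$ --- note the \emph{non-uniform} multiplicities --- so that Rizzi's lemma (any $k+1$ pairwise disjoint perfect matchings of $P$ plus $k$ added matchings must contain all the added matchings) pins down exactly how many of the $4k-2$ putative matchings cross each side of the gadget ($2k-1$ on each side, Lemma \ref{lem:Qk}). A parity count at a triangle gadget $T_k$ then yields the contradiction. The cases $r\not\equiv 0\pmod 4$ are handled not by an ad hoc degree correction but by exhibiting four explicit pairwise disjoint perfect matchings $N_0,\dots,N_3$ of $H_k$ with $H_{k+1}=H_k+(N_0+N_1+N_2+N_3)$, so that too many disjoint $1$-factors in $H_k+N_0$ (say) would lift to too many in $H_{k+1}$. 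To repair your proposal you would essentially have to re-import this Petersen-based machinery; the projection-plus-intersecting-family argument cannot close on its own.
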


Indeed, we prove that for any $r \geq 4$, there are $r$-graphs of order 60 and simple $r$-graphs of order $70(r-1)$
with this properties.

\section{Proof of Theorem \ref{main1}}

Let $G$ be a graph and $N_1, \dots, N_k$ be a collection of subsets of $E(G)$. 
The graph $G' = G+(N_1 + \dots + N_k)$ consists of all vertices of $G$, 
i.e.~$V(G')=V(G)$, and $E(G')=E(G)\cup \bigcup_{i=1}^k N_i'$, where $N_i'$ is a copy of $N_i$. 

\subsection{Perfect matchings of the Petersen graph}

The edge set of a 1-factor of a graph $G$ is a perfect matching of $G$.  
A collection $\ca C$ is a set of objects where repetitions are allowed. Namely we can formally define it as a set $C=\{C_1,\dots, C_n\}$ together with a function $m\colon C \to \N $ which gives the multiplicity of each object $C_j$ in $\ca C$, that is the number of occurrences of $C_j$ in $\ca C$. A subcollection $\ca C'$ of $\ca C$ is a subset $C'\subseteq C$ with a function $m'\colon C'\to \N$ such that $m'(C_j)\le m(C_j)$ for all $j\in\{1,\dots,n\}$. In this case we will write $\ca C' \subseteq  \ca C.$ 

Our construction heavily relies on the properties of the Petersen graph $P$. Let $v_0\dots v_5$ and $u_1u_3u_5u_2u_4$ be the two disjoint $5$-cycles of $P$ such that $u_1v_1$, $u_2v_2$, $u_3v_3$, $u_4v_4, u_5v_5\in E(P) $. 
Let $M_0 = \{u_iv_i\colon i \in \{1,\dots,5\}\}$ and let $M_1$ be the only other perfect matching containing $u_1v_1$. Moreover let $M_i = \{u_{i+1}v_{j+1}\colon u_iv_j\in M_1\} $, where the sum of indices is taken modulo $5$, see Figure \ref{Fig:match_P}.  Let $\ca M = \{N_1,\dots,N_k\}$ be a collection of perfect matchings of $P$ and let $P^{\ca M}=P+\sum_{j=1}^k N_j$. We say that a perfect matching $N$ of $P^{\ca M}$ is of type $j$, if $N$ is a copy of $M_j$. In this case we write $t(N)=j$.
We will use the following results of Rizzi \cite{Rizzi}.

\begin{prop}[\cite{Rizzi}]\label{prop:bijection}
	The function associating to every pair of perfect matchings $M_i,M_j$ of $P$ the unique edge $e\in M_i\cap M_j$ is a bijection.
\end{prop}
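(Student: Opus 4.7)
The plan is to prove the proposition by a simple double-counting argument that uses three well-known properties of $P$: (a) $P$ has exactly six perfect matchings, namely $M_0,\dots,M_5$; (b) $P$ is edge-transitive; and (c) $P$ has girth $5$ and is non-Hamiltonian. Both the domain and codomain of the stated map have cardinality $15$, since $\binom{6}{2}=15=|E(P)|$, so it suffices to prove that the map is well-defined and surjective; bijectivity will follow automatically.

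First I would establish that every edge of $P$ lies in exactly two perfect matchings. By (b), the number $k$ of perfect matchings containing a fixed edge does not depend on the edge, and counting incidences yields $15k = 6\cdot 5 = 30$, so $k = 2$. In particular, every edge $e\in E(P)$ determines an unordered pair of matchings (the two containing it), which will give surjectivity of the map once well-definedness is known.

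Next I would show that any two distinct perfect matchings $M\neq M'$ share at least one edge. The symmetric difference $M\triangle M'$ is a $2$-regular subgraph of $P$, hence a disjoint union of even cycles. By (c) every cycle of $P$ has length $\geq 5$, so every even cycle has length $\geq 6$, and $P$ contains no Hamiltonian $10$-cycle. This rules out $|M\triangle M'| = 10$ and thus forces $|M\cap M'|\geq 1$.

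Combining these two ingredients by double counting gives
\[
\sum_{0\le i<j\le 5}|M_i\cap M_j| \;=\; \sum_{e\in E(P)}\binom{m(e)}{2} \;=\; 15\cdot\binom{2}{2} \;=\; 15,
\]
where $m(e)=2$ denotes the number of perfect matchings containing $e$. Since the left-hand sum has $15$ terms each at least $1$, all of them must equal $1$. Hence the map $(M_i,M_j)\mapsto M_i\cap M_j$ is well-defined, and being a surjection between finite sets of the same cardinality it is a bijection. The subtlest point is property (a); once the list $M_0,\dots,M_5$ is confirmed to be complete (either as a standard fact about $P$ or via a short case analysis starting from a single edge, using the structure already fixed in the preceding paragraph), the remainder is a purely formal counting argument combined with the girth and non-Hamiltonicity of $P$.
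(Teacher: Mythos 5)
Your argument is correct. The paper itself gives no proof of this proposition; it is imported verbatim from Rizzi's paper, so there is nothing internal to compare against. Your double-counting route is the standard self-contained way to establish the statement: from the facts that $P$ has exactly six perfect matchings, is edge-transitive, has girth $5$ and is non-Hamiltonian, you correctly deduce that each edge lies in exactly two perfect matchings ($15k=30$), that any two distinct perfect matchings meet (their symmetric difference is a union of even cycles of length at least $6$, and the only way to avoid a common edge would be a Hamiltonian $10$-cycle), and then the identity $\sum_{i<j}\lvert M_i\cap M_j\rvert=\sum_e\binom{m(e)}{2}=15$ forces every intersection to be a single edge, giving well-definedness and, by equal cardinalities, bijectivity. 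The one point you rightly flag as load-bearing is the completeness of the list $M_0,\dots,M_5$; for a fully self-contained write-up you would want to include the short case analysis (fix an edge, observe it extends to exactly two perfect matchings, and use edge-transitivity or direct enumeration to conclude there are six in total), since without it the count $6\cdot 5=30$ has no basis. As it stands, your proof is a clean and arguably more transparent justification than the bare citation the paper offers.
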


\begin{lem}[\cite{Rizzi}]\label{lem:P_1}
	Consider a perfect matching $M_j$ of $P$ and let $P' = P+M_j$. Furthermore let $N_1$ and $N_2$ be two disjoint perfect matchings of $P'$. Then $j\in \{t(N_1),t(N_2)\}$.
\end{lem}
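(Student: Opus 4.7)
The plan is to project every perfect matching of $P'=P+M_j$ to a perfect matching of $P$ by replacing each copy of a duplicated edge with the original edge. Since $P$ has exactly the six perfect matchings $M_0,\dots,M_5$, this projection makes the type $t(N)\in\{0,\dots,5\}$ well defined for every perfect matching $N$ of $P'$. From there I would argue by contradiction, assuming $a:=t(N_1)\ne j$ and $b:=t(N_2)\ne j$, and split on whether $a\ne b$ or $a=b$.

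In the case $a\ne b$, Proposition \ref{prop:bijection} furnishes a unique edge $e\in M_a\cap M_b$. Because the same proposition actually delivers a bijection between unordered pairs of matchings and the 15 edges of $P$, every edge of $P$ belongs to exactly two of the six matchings; with $j\notin\{a,b\}$ this forces $e\notin M_j$. Hence $e$ was not duplicated when forming $P'$, so only one copy of it is available. Both $N_1$ and $N_2$ have to contain this single copy, contradicting their disjointness.

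In the case $a=b$, both $N_1$ and $N_2$ project to the same matching $M_a$ with $a\ne j$. By Proposition \ref{prop:bijection}, $|M_a\cap M_j|=1$, so exactly one edge of $M_a$ was duplicated in $P'$ while the remaining four edges of $M_a$ each occur just once. Since $N_1$ and $N_2$ both project to $M_a$, each of them would have to pick up all four of those unique edges, again contradicting disjointness.

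The argument is not technically hard; the only step that uses anything beyond bookkeeping is the observation that the common edge $e$ in the first case lies outside $M_j$, which is where the full bijective strength of Proposition \ref{prop:bijection} (every edge is shared by exactly one pair of matchings) is actually needed. I expect this small point to be the sole place where a careful reader might wish to pause.
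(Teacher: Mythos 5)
Your argument is correct and complete. The paper itself gives no proof of this lemma -- it is quoted from Rizzi's paper -- so there is nothing internal to compare against; but your reasoning stands on its own. The projection step is sound (a perfect matching of $P'$ cannot use both copies of a doubled edge, so it descends to one of the six perfect matchings of $P$, making $t(N)$ well defined), and both cases of the contradiction go through. The one point you flagged yourself -- that the common edge $e\in M_a\cap M_b$ lies outside $M_j$ -- is indeed the crux, and it follows from Proposition \ref{prop:bijection} exactly as you say: if $e$ also belonged to $M_j$, then the three pairs $\{M_a,M_b\}$, $\{M_a,M_j\}$, $\{M_b,M_j\}$ would all map to $e$, violating injectivity, so each edge of $P$ lies in exactly two of the six matchings. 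The case $a=b$ is handled correctly as well, since $|M_a\cap M_j|=1$ leaves four simple edges that both $N_1$ and $N_2$ would be forced to share.
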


	\begin{lem}\label{lem:P+matchings}
Let $\ca M$ be a collection of $k$ perfect matchings of $P$. If $\ca M' = \{M_1',\dots,M_k',M_{k+1}'\}$ is a 
collection of $k+1$ pairwise disjoint perfect matchings of $P^{\ca M}$, then $ \ca M \subseteq \ca M'$.
	\end{lem}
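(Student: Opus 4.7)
The plan is to parametrize both collections $\mathcal{M}$ and $\mathcal{M}'$ by how their elements are distributed among the six types $0,1,\ldots,5$, and to convert the pairwise edge-disjointness in $\mathcal{M}'$ into a small system of linear inequalities that can be settled by integer arithmetic. A first observation is that every perfect matching of $P^{\mathcal{M}}$ projects to a perfect matching of $P$ (two parallel copies of the same $P$-edge share both endpoints and hence cannot both appear in a matching), so it carries a well-defined type in $\{0,1,\ldots,5\}$. For each $t\in\{0,\ldots,5\}$, let $m_t$ count the elements of $\mathcal{M}$ equal to $M_t$ and let $a_t$ count the elements of $\mathcal{M}'$ of type $t$, so that $\sum_t m_t=k$ and $\sum_t a_t=k+1$. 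The desired conclusion $\mathcal{M}\subseteq\mathcal{M}'$ is equivalent to $a_t\geq m_t$ for every $t$.

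Proposition~\ref{prop:bijection} identifies the $15$ edges of $P$ with the $\binom{6}{2}=15$ pairs $\{i,j\}$ via $e_{ij}\in M_i\cap M_j$, so each edge of $P$ lies in exactly two perfect matchings of $P$. Consequently, in $P^{\mathcal{M}}$ the multiplicity of $e_{ij}$ is $1+m_i+m_j$. A matching of type $t$ uses one copy of each edge of $M_t$, so the total number of members of $\mathcal{M}'$ that use $e_{ij}$ equals $a_i+a_j$, and pairwise disjointness forces
\[
a_i+a_j \;\leq\; 1+m_i+m_j \qquad \text{for all } i\neq j.
\]
Setting $b_t=a_t-m_t$ reduces the lemma to the following purely arithmetic statement: if integers $b_0,\ldots,b_5$ satisfy $\sum_t b_t=1$ and $b_i+b_j\leq 1$ for every $i\neq j$, then $b_t\geq 0$ for every $t$.

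I would finish with a short integer analysis. Summing $b_i+b_j\leq 1$ over the five pairs containing a fixed index $t$ yields $4b_t+1\leq 5$, hence $b_t\leq 1$; summing over the ten pairs avoiding $t$ gives $4\sum_{s\neq t}b_s\leq 10$, so $b_t=1-\sum_{s\neq t}b_s\geq -3/2$ and, by integrality, $b_t\geq -1$. Thus each $b_t\in\{-1,0,1\}$. The pairwise constraint forbids two indices from simultaneously equaling $1$, so at most one $b_t=1$; were some $b_{t_0}=-1$, the remaining five values would sum to $2$ while each lies in $\{-1,0,1\}$ with at most one $+1$ among them, which is impossible. Therefore $b_t\geq 0$ for every $t$, which is the desired inclusion. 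The main obstacle in the argument is the translation of edge-disjointness into the inequalities above; once Proposition~\ref{prop:bijection} supplies the multiplicity formula $1+m_i+m_j$ for $e_{ij}$, the lemma collapses to bookkeeping on six non-negative integers.
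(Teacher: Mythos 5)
Your proof is correct, but it takes a genuinely different route from the paper. The paper argues by induction on $k$: the base case is Rizzi's Lemma~\ref{lem:P_1}, and the inductive step uses Proposition~\ref{prop:bijection} to locate, for two members of $\ca M'$ of different types, a common underlying edge of $P$; disjointness forces that edge to be a multiedge of $P^{\ca M}$, so a copy of one of the two types lies in $\ca M$, and that copy is peeled off from both collections before applying the inductive hypothesis. You instead reduce everything to the type-multiplicity vectors $(m_t)$ and $(a_t)$, use Proposition~\ref{prop:bijection} to read off that the edge $e_{ij}$ has multiplicity $1+m_i+m_j$ in $P^{\ca M}$ and is consumed $a_i+a_j$ times by $\ca M'$, and then settle the resulting system $\sum_t b_t=1$, $b_i+b_j\le 1$ by elementary integer arithmetic; I checked the arithmetic ($4b_t+1\le 5$, integrality giving $b_t\ge -1$, and the exclusion of a $-1$ entry) and it is sound. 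Your approach dispenses with both the induction and Lemma~\ref{lem:P_1}, and it isolates exactly what is used about the Petersen graph: that it has precisely six perfect matchings and that distinct pairs of them meet in distinct single edges (Proposition~\ref{prop:bijection}). The only point you should make explicit is that the ``well-defined type'' of a perfect matching of $P^{\ca M}$ relies on $M_0,\dots,M_5$ being \emph{all} perfect matchings of $P$ --- a standard fact that the paper also uses implicitly --- and, as you do note, that a matching of type $i$ or $j$ uses exactly one parallel copy of $e_{ij}$, so disjointness really does yield $a_i+a_j\le 1+m_i+m_j$.
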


	\begin{proof}
		We argue by induction over $k\in \N$. If $k=1$, then the statement holds by Lemma \ref{lem:P_1}. So let $k\ge 2$ 
		and $\ca M' = \{M_1',\dots,M_{k+1}',M_{k+1}'\}$ be pairwise disjoint perfect matchings of $P^{\ca M}$.
		
		If $M'_i=M'_j$ for all $i,j \in \{1,\dots,k+1\}$, then $\ca M$ must contain a unique perfect matching repeated $k$ times, and so $\ca M \subseteq \ca M'$.
		
		Otherwise there are $i,j\in\{1,\dots,k+1\}$ such that $M_i'\ne M_j'$. There is a unique edge $e\in P$ such that $\{e\} = M_i'\cap M_j'$. Such an edge must be a multiedge in $P^{\ca M}$. Then either $M_i'$ or $M_j'$ has been added to $P$ in order to obtain $P^{\ca M}$. This means that both $\ca M$ and $\ca M'$ contain a copy $M$ of the same perfect matching. Therefore, by the inductive hypothesis, $\ca M \setminus \{M\} \subseteq \ca M' \setminus \{M\}$, and so $\ca M \subseteq \ca M'.$
	\end{proof}

\subsection{$4k$-edge-connected $4k$-graphs without $4k-2$ pairwise disjoint perfect matchings}

	\begin{figure}
		\centering
		\includegraphics[scale=0.6]{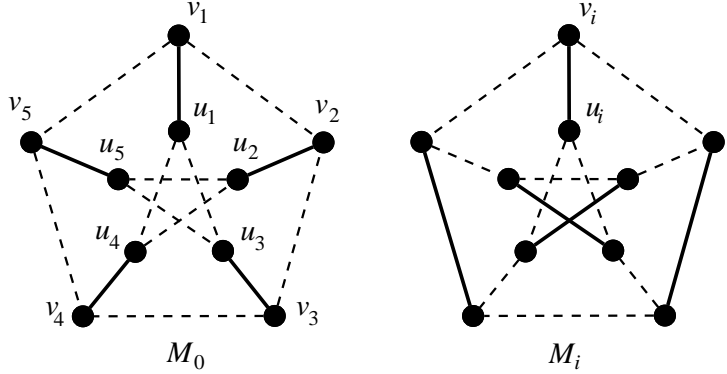}
		\caption{The perfect matchings $M_0,\dots, M_5$ of $P$.}\label{Fig:match_P}
	\end{figure}
	
For $k \geq 1$, let $P_k=P+kM_0+(k-1)(M_1+M_3+M_4)$.
	
	\begin{lem}\label{lem:P_k_connectivity}
For all $k \geq 1:$ $P_k$ is $4k$-edge-connected and $4k$-regular.
	\end{lem}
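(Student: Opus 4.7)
The $4k$-regularity is immediate: $P$ is $3$-regular and each added copy of a perfect matching raises every vertex degree by $1$, so every vertex of $P_k$ has degree $3 + k + 3(k-1) = 4k$.

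For the edge-connectivity I fix a nontrivial cut, i.e.\ $\emptyset\ne S\subsetneq V(P)$, and since $\partial_{P_k}(S)=\partial_{P_k}(V(P)\setminus S)$ I may assume $|S|\le 5$. The main tool is the decomposition
\[
|\partial_{P_k}(S)| \;=\; |\partial_P(S)| + k\,|\partial_{M_0}(S)| + (k-1)\!\sum_{i\in\{1,3,4\}}\! |\partial_{M_i}(S)|,
\]
together with the observation that for every perfect matching $M$ of $P$ the integer $|\partial_M(S)|$ has the same parity as $|S|$. I will split the analysis by the parity of $|S|$.

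If $|S|\in\{1,3,5\}$, each matching cut $|\partial_M(S)|$ is odd and hence at least $1$, and $|\partial_P(S)|\ge 3$ since $P$ is $3$-edge-connected; summing the contributions gives $|\partial_{P_k}(S)|\ge 3+k+3(k-1)=4k$, so the odd case is essentially automatic.

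The real work is the even case $|S|\in\{2,4\}$, where matching cuts can vanish. Here I appeal to two structural facts about $P$: first, $P$ has girth $5$, whence $e(P[S])\le |S|-1$ and so $|\partial_P(S)|\ge |S|+2$; second, Proposition \ref{prop:bijection} says any two distinct perfect matchings of $P$ share exactly one edge. The latter forces at most one of $M_0,M_1,M_3,M_4$ to have both of its $S$-internal edges inside an $S$ of size $4$, and, since every edge of $P$ lies in exactly two of the six perfect matchings of $P$, at most two of the four can contain the single edge of an $S$ of size $2$. A routine case split according to which of $M_0,M_1,M_3,M_4$ contribute $0$ to the sum then verifies $|\partial_{P_k}(S)|\ge 4k$ in every subcase. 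The main obstacle is keeping the bookkeeping honest in the extremal subcase---$|S|=2$ an edge lying in $M_0$ and in exactly one of $M_1,M_3,M_4$---where the bound $4k$ is attained with equality, matching the trivial upper bound forced by $4k$-regularity.
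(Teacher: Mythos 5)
Your proof is correct, but the even case is handled by a genuinely different argument than the paper's. The paper observes that, since $P$ has girth $5$, a set $X$ with $|X|\in\{2,4\}$ induces a forest, and any vertex of $X$ with a single neighbour inside $X$ is joined to it by at most $2k$ parallel edges (the maximum multiplicity in $P_k$), hence sends at least $4k-2k=2k$ edges across the cut; two such vertices already give $|\partial(X)|\ge 4k$. You instead decompose $|\partial_{P_k}(S)|$ matching by matching and control how many of $M_0,M_1,M_3,M_4$ can vanish on the cut, using Proposition \ref{prop:bijection} (two distinct perfect matchings of $P$ meet in exactly one edge, so each edge lies in exactly two of the six perfect matchings). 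Your subcase computations all check out: for $|S|=2$ with an internal edge the worst case is an edge in $M_0$ and exactly one of $M_1,M_3,M_4$, giving exactly $4k$ (these are precisely the multiplicity-$2k$ edges, so equality is forced by regularity, as you note); for $|S|=4$ at most one of the four matchings can place two disjoint edges inside $S$, and every branch gives at least $6k-2$. The trade-off: the paper's degree-counting argument is shorter and insensitive to which matchings were added -- it only needs the maximum multiplicity to be at most half the degree -- whereas your version makes that multiplicity bound explicit (the paper asserts it without justification) and pinpoints exactly where the bound $4k$ is tight. One small imprecision on each side: the paper claims the induced subgraph is a path, which fails for a claw $K_{1,3}$ when $|X|=4$ (harmless, since a claw has two leaves and no two disjoint edges); your phrase ``both of its $S$-internal edges'' presupposes a matching has two edges inside $S$, but your intended claim -- at most one of the four matchings contains two disjoint edges of the forest $P[S]$ -- is what the argument uses and is correct.
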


	\begin{proof} By definition, $P_k$ is $4k$-regular. 
		Let $X\subseteq V(P)$. If $|X|$ is odd, then every perfect matching intersects $\partial (X)$.
		Hence, $|\partial (X)|\ge 3 + k + 3(k-1) = 4k.$ If $|X|$ is even, then it suffices to consider the cases 
		$|X|\in\{2,4\}$. Since $P$ has girth $5$, the subgraph induced by $X$ is a path $P_X$ on either $2$ or $4$ vertices, having some multiple edge. In both cases, since the maximum multiplicity of an edge is $2k$, we have that $\partial(X)$ contains $2k$ edges per both end vertices of $P_X$, namely $|\partial(X)|\ge 4k $.
	\end{proof}

	Consider two copies $P_k^1$ and $P_k^2$ of $P_k$. If $u$ is a vertex (or edge) of $P_k$, then we denote $u^i$ the corresponding vertex (or edge) inside $P_k^i$. For $i \in \{1,2\}$ remove the multiedge $u_1^iv_1^i$ from $P_k^i$ and let $Q_k$ be the graph obtained by identifying $u_1^1$ and $u_1^2$ to the (new) vertex $u_{Q_k}$. It holds
	$d_{Q_k}(u_{Q_k}) = 4k$, and the $2k$ edges of $\partial(u_{Q_k})$ which are incident to vertices of $V(P_k^i)$ are denoted by $U_k^{i}$.
	If $Q_k$ is a subgraph of a graph $G$, then let 
		$V_k^i=\{xv_1^i \in E(G)\colon x\notin V(Q_k)\}$.
	
	\begin{figure}
		\centering
		\includegraphics[scale=0.55]{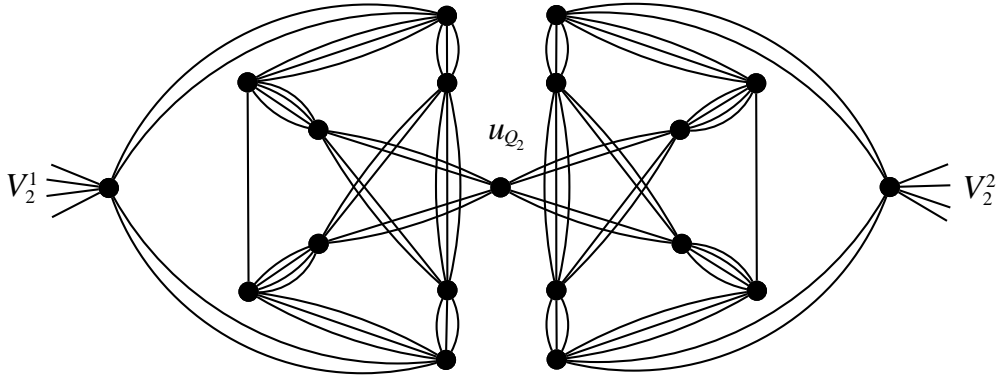}
		\caption{The subgraph $Q_2$.}\label{Fig:Q2}
	\end{figure}

	Let $\{N_1,\dots,N_n\}$ be a collection of perfect matchings of a graph $G$, define the function $\phi \colon E(G)\to \Z_2^n, e\mapsto (\phi_1(e),\dots,\phi_n(e)) $ such that $$\phi_j(e) = \begin{cases}
	1 & \text{if } e\in N_j;\\
	0 & \text{otherwise.}
	\end{cases}$$
	Moreover, if $W\subseteq E(G)$, then let $\phi(W)=\sum_{e\in W}\phi(e)$.
	For a vector $x = (x_1,\dots,x_n) \in \Z_2^n$ the number of its non-zero entries is denoted by $\omega(x)$.
	
	\begin{lem}\label{lem:Qk}
		Let $Q_k$ be a subgraph of a graph $G$ with $\partial(V(Q_k)) = V_k^{1} \cup V_k^{2}$. If, for all $i\in \{1,2\}, d_G(v_1^i)=4k$ and
		$\ca N =\{N_1,\dots, N_{4k-2}\}$ is a family of pairwise disjoint perfect matchings of $G$,
		then $$\omega(\phi(V^1_k))=\omega(\phi(V^2_k))=2k-1.$$
	\end{lem}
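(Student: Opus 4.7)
The plan is to reinterpret $\omega(\phi(V_k^i))$ as a counting quantity, obtain a parity identity linking $i=1$ and $i=2$, and then remove the remaining slack via Lemma~\ref{lem:P+matchings}. I would begin by noting that $d_G(v_1^i) = 4k$ while $v_1^i$ has only $2k$ incident edges inside $Q_k$ (the $2k$ copies of $u_1 v_1$ were removed when forming $Q_k$), so $|V_k^i| = 2k$ and every edge of $V_k^i$ is incident to $v_1^i$. In particular $|N_j \cap V_k^i| \in \{0,1\}$ for each $j$, hence $\omega(\phi(V_k^i))$ is exactly the count $a_i := |\{j : N_j \cap V_k^i \neq \emptyset\}|$ of matchings whose $v_1^i$-edge leaves $Q_k$. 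The lemma reduces to showing $a_1 = a_2 = 2k-1$.

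The second step is a parity argument on the odd-sized set $S_i := V(P_k^i) \setminus \{u_1^i\}$ (nine vertices), whose $G$-cut is exactly $U_k^i \cup V_k^i$ (of total size $4k$). Each $N_j$ meets an odd cut in an odd number of edges, and each of $|N_j \cap U_k^i|$ and $|N_j \cap V_k^i|$ lies in $\{0,1\}$, so exactly one of them equals $1$. Letting $c_j \in \{1,2\}$ denote the side of $Q_k$ containing the $u_{Q_k}$-edge of $N_j$, this says $|N_j \cap V_k^i| = 1$ iff $c_j \neq i$, and summing over $j$ yields $a_1 + a_2 = 4k - 2$. Combined with the trivial $a_i \leq |V_k^i| = 2k$, this only gives $a_i \in \{2k-2, 2k-1, 2k\}$; crucially, it does not rule out the asymmetric pair $(a_1, a_2) = (2k-2, 2k)$, which is the main obstacle.

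Breaking this symmetry is where Lemma~\ref{lem:P+matchings} enters. For each $i$, I would lift $\{N_j\}_j$ to a family of $4k-2$ pairwise disjoint perfect matchings $\tilde N_j^i$ of the full copy $P_k^i$ (with its $2k$ parallel $u_1 v_1$-edges restored): when $c_j = i$ both $u_1^i$ and $v_1^i$ are already saturated by $N_j$ inside $Q_k$ and no $u_1 v_1$-edge is added, while when $c_j \neq i$ both are unsaturated and one fresh copy of $u_1 v_1$ is attached. The resulting family uses exactly $a_i$ copies of $u_1 v_1$. Since $P_k^i = P^{\ca M}$ with $\ca M = kM_0 + (k-1)(M_1+M_3+M_4)$ of size $4k-3$, Lemma~\ref{lem:P+matchings} forces this family to contain $\ca M$ as a subcollection together with one further perfect matching $M_\ast$ of $P$. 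Because $M_0$ and $M_1$ are the only perfect matchings of $P$ through $u_1 v_1$, $\ca M$ contributes $k + (k-1) = 2k-1$ copies of that edge and $M_\ast$ contributes $0$ or $1$; hence $a_i \in \{2k-1, 2k\}$, and combined with $a_1 + a_2 = 4k - 2$ this forces $a_1 = a_2 = 2k - 1$.
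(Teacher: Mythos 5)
Your argument is correct and is essentially the paper's own proof, only written out in more detail: both use the odd cuts at $Q_k$ to get $\omega(\phi(V_k^1))+\omega(\phi(V_k^2))=4k-2$ with each term at most $2k$, and both lift $\ca N$ to a family of $4k-2$ pairwise disjoint perfect matchings of $P_k^i$ and invoke Lemma~\ref{lem:P+matchings} to count copies of $u_1v_1$ and exclude the value $2k-2$. The only (cosmetic) difference is that you derive the lower bound $2k-1$ directly for both sides, whereas the paper phrases the final step as a contradiction.
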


	\begin{proof}
Every perfect matching of $G$ intersects $\partial(V(Q_k))$ precisely once since $|V(Q_k)|$ is odd.

It remains to show that $V_k^i$ intersects precisely $2k-1$ elements of $\ca N$.
Recall that $Q_k$ is constructed by using two copies of $ P + \sum_{M\in \ca M} M$, where
$$\ca  M = \{M_0,\underbrace{M_0,M_1,M_3,M_4,\dots,M_0,M_1,M_3,M_4}_{(k-1)\text{-times}}\}.$$
 
Since $|V_k^{i}| = 2k$ and $\ca N$ contains $4k-2$ perfect matchings, it follows 
that $\omega(\phi(V_k^i))\in \{ 2k-2,2k-1,2k\}$.
Suppose to the contrary that $\omega(\phi(V_k^1))=2k-2$, which is equivalent to $\omega(\phi(V_k^2))=2k$.
Furthermore, $U_k^1$ ($U_k^2$) intersects the same matchings of $\ca N$ as $V_k^2$ ($V_k^1$). 
Therefore, there is a family $\ca N_P$ of $4k-2$ pairwise disjoint perfect matching in $P_k^{1}$ such that 
$\ca M \nsubseteq \ca N_P$, a contradiction to Lemma \ref{lem:P+matchings}. 
Hence, $\omega(\phi(V_k^1))= \omega(\phi(V_k^2))=2k-1$.
	\end{proof}
	
	Let $T_k$ be the graph on three vertices $x_1,x_2,x_3$ such that, for all $i\ne j$, there are $k$ parallel edges connecting $x_i$ to $x_j$.
	
	\begin{figure}
		\centering
		\includegraphics[scale=0.55]{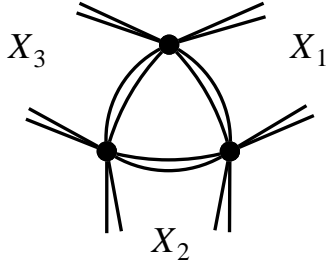}
		\caption{The subgraph $T_2$.}\label{Fig:T2}
	\end{figure}
	
Let $G$ be a cubic graph. Construct the graph $S_k(G)$ as follows: replace every node $v\in V(G)$ by a copy $T_k^v$ of the graph $T_k$ and every edge $e \in E(G)$ by a copy $Q_k^e$ of the graph $Q_k$. If the vertex $v'$ is adjacent with the edge $e'$, then the graphs $T_k^{v'}$ and $Q_k^{e'}$ are connected by $2k$ edges. More precisely, add $k$ edges connecting $v_1^1$ (or $v_1^2$) together with $x_i$ and $k$ edges connecting $v_1^1$ (or $v_1^2$) together with $x_{i+1}$, for suitable $i\in \Z_3$. Connect those graphs 
in such a way that the resulting graph $S_k(G)$ is $4k$-regular.
	
	Let $p=w_1e_1\dots w_ne_n$ be a path in $G$, for $w_j\in V(G)$ and $e_j\in E(G)$, then the chain $C=T_k^{w_1}Q_k^{e_1}\dots T_k^{w_n}Q_k^{e_n}$ consists of graphs which are connected to the previous and the next one, with respect to the chain order, in $S_k(G)$. In this case, we will say that the chain of a graph $C$ forms a path in $S_k(G)$.

	\begin{lem} \label{connectivity}
		Let $G$ be a bridgeless cubic graph. For all $k \geq 1:$
		$S_k(G)$ is a $4k$-edge-connected $4k$-regular graph. 
	\end{lem}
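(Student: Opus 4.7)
The plan is to verify $4k$-regularity by a direct degree count and to establish $4k$-edge-connectivity by an uncrossing reduction. For regularity, each $x_j \in V(T_k^v)$ has $2k$ internal edges (two $k$-fold bundles to the other vertices of $T_k^v$) plus $2k$ external edges to two adjacent $Q_k^e$-copies; each $v_1^i \in V(Q_k^e)$ has $4k - 2k = 2k$ internal edges (the $2k$-fold multiedge $u_1^i v_1^i$ having been removed) plus the $2k$ external edges $V_k^i$; the merged vertex $u_{Q_k}$ has $2k$ edges from each of $P_k^1, P_k^2$; all other vertices are interior vertices of some $P_k$-copy and retain degree $4k$.

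For edge-connectivity, take a nontrivial $X \subseteq V(S_k(G))$ and call a super-component $H \in \{T_k^v : v \in V(G)\} \cup \{Q_k^e : e \in E(G)\}$ \emph{split} if $X \cap V(H) \notin \{\emptyset, V(H)\}$. I would show that whenever some $H$ is split, at least one of the replacements $X \mapsto X \cup V(H)$ or $X \mapsto X \setminus V(H)$ does not increase $|\partial(X)|$. Since the super-components are vertex-disjoint, each uncrossing reduces the number of splits by one, so iteration terminates at a set $X^*$ with no splits and $|\partial(X^*)| \leq |\partial(X)|$. Writing $S = X \cap V(H)$, $e_{int} = |\partial_H(S)|$, and classifying the external edges at $V(H)$ by four counts $\alpha, \beta, \gamma, \delta$ (according to whether the $V(H)$-end lies in $S$ and whether the other end lies in $X$), the two uncrossing moves change $|\partial(X)|$ by $\alpha - \beta - e_{int}$ and $\delta - \gamma - e_{int}$ respectively. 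For $H = T_k^v$ one has $e_{int} = 2k$ and each vertex has external degree $2k$; direct substitution shows the remove-change equals $2\alpha - 4k \leq 0$ when $|S| = 1$ (the case $|S| = 2$ is symmetric). For $H = Q_k^e$, only $v_1^1, v_1^2$ carry external edges ($2k$ each), and a case analysis on the three possibilities for $|S \cap \{v_1^1, v_1^2\}|$ shows that at least one of the two changes is bounded above by $2k - e_{int}$, which is $\leq 0$ provided $e_{int} \geq 2k$.

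The main obstacle is therefore the bound $e_{int} \geq 2k$ for every nontrivial $S \subseteq V(Q_k^e)$, i.e., the $2k$-edge-connectivity of $Q_k^e$. I would prove this using Lemma \ref{lem:P_k_connectivity}: setting $S_i = S \cap V(P_k^i)$ and applying the $4k$-edge-connectivity of $P_k$ on each nontrivial side yields a side-cut of at least $4k$, from which we subtract at most $2k$ to account for the removed multiedge $u_1^i v_1^i$, giving at least $2k$; a short case analysis on whether $u_{Q_k} \in S$ and whether each $v_1^i \in S$ covers all configurations. Once the uncrossings terminate, $X^*$ is a union of whole super-components, so $\partial(X^*)$ consists entirely of $2k$-fold bundles --- one per edge of the incidence graph $I(G)$ of $G$ crossing the induced partition of $V(G) \cup E(G)$. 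Since $G$ is connected and bridgeless, $I(G)$ is $2$-edge-connected (every edge $\{v, e\}$ of $I(G)$ lies in a cycle, because $v$ has another incident edge in $G$ and $e$ has another endpoint in $G$), so the cut uses at least two bundles, giving $|\partial(X)| \geq |\partial(X^*)| \geq 2 \cdot 2k = 4k$.
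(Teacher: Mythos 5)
Your approach is genuinely different from the paper's: the paper proves the lemma by exhibiting $4k$ pairwise edge-disjoint paths between any two vertices (via Claim \ref{claim:edge-disj-paths} and the two edge-disjoint paths/circuits supplied by bridgelessness of $G$), whereas you bound cuts directly by uncrossing against the super-components. Your regularity count, the four-parameter bookkeeping for the two uncrossing moves, the verification that $T_k$-splits and $Q_k$-splits always admit a non-increasing move, and the reduction of the $Q_k$ case to $|\partial_{Q_k}(S)|\ge 2k$ via Lemma \ref{lem:P_k_connectivity} are all correct.

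There is, however, a genuine gap at the termination step. A set with no split super-components need not induce a nontrivial partition of $V(G)\cup E(G)$: the terminal set $X^*$ can be $\emptyset$ or all of $V(S_k(G))$, in which case the final inequality $|\partial(X^*)|\ge 2\cdot 2k$ is false ($\partial(X^*)=\emptyset$) and the chain $|\partial(X)|\ge|\partial(X^*)|\ge 4k$ proves nothing. This degeneracy really occurs: if $X$ is a nontrivial subset of a single $Q_k^e$ avoiding $v_1^1,v_1^2$, then $\alpha=\beta=\gamma=0$, the remove-move is chosen, and $X^*=\emptyset$. The standard fix is to handle separately the case where $X$ (or its complement) lies inside one super-component and show directly that $|\partial_{S_k(G)}(X)|\ge 4k$ there; but for $Q_k^e$ this needs more than the $2k$-edge-connectivity of $Q_k$ that you prove. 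Concretely, writing $|\partial_{S_k(G)}(S)|=|\partial_{Q_k}(S)|+2k\,|S\cap\{v_1^1,v_1^2\}|$, the case $S\cap\{v_1^1,v_1^2\}=\emptyset$ requires $|\partial_{Q_k}(S)|\ge 4k$. This is true, but you must argue it: if $u_{Q_k}\in S$ then both $S\cap V(P_k^i)$ are nontrivial and each side contributes at least $4k-2k=2k$; if $u_{Q_k}\notin S$ then some nonempty $S\cap V(P_k^i)$ avoids both $u_1^i$ and $v_1^i$, the deleted multiedge does not cross, and that side alone contributes $4k$. With this supplementary bound (and the easy analogue for $T_k^v$) your argument closes; without it, the proof as written is incomplete.
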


	\begin{proof}
$S_k(G)$ is $4k$-regular by construction. We show that 
there are $4k$ pairwise disjoint paths between any two vertices of $S_k(G)$. 
Consider the graph $R_k = P_k - u_1v_1$, where we remove from $P_k$ all 
($2k$) edges connecting $u_1$ to $v_1$.
	
		\begin{claim}\label{claim:edge-disj-paths}
			The following statements hold:
			\begin{itemize}
				\item[i.] there are $2k$ edge-disjoint $u_1v_1$-paths in $R_k$;
				\item[ii.] for all $w\in V(P_k)\setminus\{u_1,v_1\}$ there are $2k$ $wu_1$-paths and $2k$ $wv_1$-paths which are pairwise edge-disjoint in $R_k$;
				\item[iii.] for all $w_1\ne w_2\in V(P_k)\setminus\{u_1,v_1\}$, there exists $t\in\{0,1,\dots, 2k\}$ such that \begin{itemize}
					\item there are $t$ edge-disjoint $w_1w_2$-paths containing $u_1v_1$ in $P_k$;
					\item there are $4k-t$ edge-disjoint $w_1w_2$-paths in $R_k$, which are moreover edge-disjoint from the previous ones.
				\end{itemize}
				\item[iv.] for all $x_i\ne x_j\in V(T_k)$, there are $2k$ edge-disjoint $x_ix_j$-paths in $T_k$.
			\end{itemize}			
		\end{claim}
		\begin{proof}
			By Lemma \ref{lem:P_k_connectivity} there are $4k$ edge-disjoint $u_1v_1$-paths in $P_k$. Since $\mu (u_1v_1)=2k$ 
			there are $2k$ edge-disjoint $u_1v_1$-paths in $R_k$ and $i.$ is proved.
			
			Let $w\in V(P_k)\setminus\{u_1,v_1\}$. By Lemma \ref{lem:P_k_connectivity}, there are $4k$ edge-disjoint $wv_1$-paths 
			in $P_k$. Then, since $P_k$ is $4k$-regular and $u_1v_1$ is an edge of multiplicity $\mu(u_1v_1)=2k$, there are $2k$ of those paths ending with the edge $u_1v_1$. Thus, there are $2k$ $wu_1$-paths and $2k$ $wv_1$-paths which 
			are pairwise edge-disjoint in $R_k$ and so $ii.$ is proved.
			
			In order to prove statement $iii.$, pick two different vertices $w_1$ and $w_2$ in $V(P_k)\setminus\{u_1,v_1\}$. Since $P_k$ is $4k$-edge-connected there are $4k$-edge-disjoint $w_1w_2$-paths in $P_k$. Let $t$ be the number of such paths containing the edge $u_1v_1.$ Then $t\le \mu(u_1v_1)=2k$.
			
			The last statement holds since there are $k$ pairwise edge-disjoint paths $x_ix_j$ and 
			there are $k$ pairwise edge-disjoint paths $x_ix_tx_j$, for $t\ne i,j$. Thus, the claim is proved.
		\end{proof}
		
Let $y_1\ne y_2\in V(S_k(G))$. There are copies of $T_k$ or $Q_k$, say $Y_1, Y_2$, such that $y_i \in Y_i$.

Case 1: $Y_1$ and $Y_2$ correspond to two vertices $w_1$ and $w_2$ of $G$, that is, they both are copies of $T_k$.
If  $w_1 = w_2$, then $Y_1 = Y_2$ and the statement is trivial. 
If $w_1\ne w_2$, since $G$ is bridgeless, there are two edge-disjoint $w_1w_2$-paths in $G$. These paths correspond 
to two chains of (internally) different subgraphs $C=Y_1N_1\dots N_pY_2$ and $C'=Y_1N_{1}'\dots N_{q}'Y_2$ 
that both form a path in $S_k(G)$. Let $s_j,t_j$ the nodes of $N_j$ which are adjacent to $N_{j-1}$ and $N_{j+1}$ respectively. 
Let $s_1$ be adjacent to $Y_1$ and $t_p$ be adjacent to $Y_2$. Define in the very same way the vertices $s_j',t_j'$ in $C'$. 
By Claim \ref{claim:edge-disj-paths}, there are $2k$ pairwise edge-disjoint $s_1t_p$-paths 
in $C$ and $2k$ pairwise edge-disjoint $s_1't_q'$-paths 
in $C'$. Notice that $s_1\ne s_1'$ and $t_p\ne t_q'.$ By Claim \ref{claim:edge-disj-paths}, there are $2k$ pairwise 
edge-disjoint 
$s_1s_1'$-paths passing through $Y_1$ and $2k$ edge-disjoint $t_pt_q'$-paths passing through $Y_2$. Therefore, all these 
paths combine to $4k$ edge-disjoint $y_1y_2$-paths in $S_k(G)$.
		
Case 2: If $Y_1$ and $Y_2$ correspond to a vertex and an edge, or to two edges of $G$, say $a_1$ and $a_2$,
then there is a circuit in $G$ which contains $a_1$ and $a_2$. By a similar argumentation as above we deduce that there are  
$4k$-edge-disjoint $y_1y_2$-paths in $S_k(G)$.
	\end{proof}

\begin{teo} \label{teo:0mod4_general}
		Let $G$ be a bridgeless cubic graph with an even number of edges. For all $k \geq 1:$
		$S_k(G)$ is a $4k$-edge-connected $4k$-graph without $4k-2$ pairwise disjoint perfect matchings.
	\end{teo}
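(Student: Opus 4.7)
The plan is to combine the structural lemmas already established with a short parity count at the vertex gadgets. First I would verify the easy half. Lemma~\ref{connectivity} gives $4k$-regularity and $4k$-edge-connectivity, so $|\partial(S)|\ge 4k$ for every non-trivial $S$, confirming the $4k$-graph property. A vertex count shows $|V(S_k(G))|=3|V(G)|+19|E(G)|$, which, together with $G$ being cubic and having an even number of edges, forces $|V(S_k(G))|$ to be even, so perfect matchings can actually exist.

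To rule out $4k-2$ pairwise disjoint perfect matchings I would argue by contradiction. Suppose $\ca N=\{N_1,\dots,N_{4k-2}\}$ is such a family. For every $e\in E(G)$ the hypotheses of Lemma~\ref{lem:Qk} are met: the external edges of $Q_k^e$ decompose as $V_k^{1,e}\cup V_k^{2,e}$, and the two port vertices have degree $4k$ since $S_k(G)$ is $4k$-regular. The lemma therefore yields
$$\omega(\phi(V_k^{1,e}))=\omega(\phi(V_k^{2,e}))=2k-1 \quad \text{in } \Z_2^{4k-2}$$
for every edge $e$ of $G$.

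The heart of the argument is a parity identity at the vertex gadget $T_k^v$. Fix $v\in V(G)$ with incident edges $e_1,e_2,e_3$, and let $i_j\in\{1,2\}$ denote the side of $Q_k^{e_j}$ glued to $T_k^v$. By construction,
$$\partial(V(T_k^v)) = V_k^{i_1,e_1}\sqcup V_k^{i_2,e_2}\sqcup V_k^{i_3,e_3}.$$
Since $|V(T_k^v)|=3$ is odd, every $N_j\in\ca N$ meets $\partial(V(T_k^v))$ in an odd number of edges, so reading the parity coordinatewise gives
$$\phi(V_k^{i_1,e_1})+\phi(V_k^{i_2,e_2})+\phi(V_k^{i_3,e_3})=\mathbf{1} \quad \text{in } \Z_2^{4k-2}.$$

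To finish, let $t$ be the number of coordinates where all three of these vectors equal $1$ and $s$ the number of coordinates where exactly one does. Then $s+t=4k-2$, while summing the three weights gives $s+3t=3(2k-1)=6k-3$. Subtracting yields $2t=2k-1$, which is not an integer, a contradiction. The step I expect to require the most care is recognising this coupling: Lemma~\ref{lem:Qk} controls each $Q_k^e$ in isolation, and no contradiction is visible there; one must use the odd-size gadget $T_k^v$ to merge three independent weight-$(2k-1)$ constraints into a single $\mathbf{1}$-sum, after which the arithmetic parity closes the argument in one line.
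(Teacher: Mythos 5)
Your proposal is correct and follows essentially the same route as the paper: the easy half via Lemma~\ref{connectivity} and the vertex count, then Lemma~\ref{lem:Qk} applied to the three edge gadgets around $T_k^v$ combined with the fact that every perfect matching meets the odd cut $\partial(V(T_k^v))$ oddly. Your final count ($s+t=4k-2$, $s+3t=6k-3$, hence $2t=2k-1$) is just an explicit rephrasing of the paper's observation that the symmetric difference of three odd sets is odd, so some coordinate of the sum vanishes.
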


\begin{proof}
By Lemma \ref{connectivity}, $S_k(G)$ is $4k$-edge-connected, $4k$-regular and 
it holds $|V(S_k(G))|=19|E(G)|+3|V(G)|\equiv 0 \mod{2}$. Thus, $S_k(G)$ is a $4k$-graph. 

Suppose to the contrary that $S_k(G)$ has $4k-2$ pairwise disjoint perfect matchings. Consider a vertex $v\in V(G)$ and the corresponding subgraph $T_k^v$. Since $T_k^v$ has three vertices it follows that no component of $\phi(\partial_{S_k(G)}(V(T_k^v)))$ is $0$.
Let $\partial_G(v)=\{e_1,e_2,e_3\}$ and $X_i$ be the set of $2k$ edges connecting $T_k^v$ to $Q_k^{e_i}$, see Figure \ref{Fig:T2}. By Lemma \ref{lem:Qk}, for all $i\in \{1,2,3\}$, we have that $\omega(\phi(X_i))=2k-1$. Since the cardinality of the symmetric difference of three odd sets is odd it follows that there is a $j\in \{1,2,\dots,4k-2\}$ such that $0=\sum_{i=1}^3 \phi_j(X_i)=\phi_j(\partial_{S_k(G)}(V(T_k^v)))$, a contradiction.
\end{proof}

\subsection{Highly connected $r$-graphs on 60 vertices}		
 
To prove the other cases of Theorem \ref{main1}, we continue with the construction of regular graphs on 60 vertices.

For $k \geq 1$: Let $H_k$  be the graph which is obtained from three copies $Q_k^1, Q_k^2, Q_k^3$ of $Q_k$. In order to simplify the description let $z_j$ be the vertex $v_1^j$ of $Q_k$, for $j\in\{1,2\}$. For $i\in\{1,2,3\}$, if $u$ is a vertex of $Q_k$ we denote by $u^i$ the corresponding vertex of the copy $Q_k^i$. Glue them together with the graph $T_k$ as follows: for all $i\in\{1,2,3\}$,
\begin{itemize}
	\item add $k$ edges connecting $x_{i+1}$ of $T_k$ to $z_1^i$ of $Q_k^i$;
	\item add $k$ edges connecting $x_{i+2}$ of $T_k$ to $z_1^i$ of $Q_k^i$;
	\item add $k$ edges connecting $z_2^i$ of $Q_k^i$ to $z_2^{i+1}$ of $Q_k^{i+1}$;
\end{itemize} where the indices are added modulo 3.
The graph $H_2$ is depicted in Figure \ref{Fig:smallest}.

\begin{lem} \label{teo:0mod4}
For all $k \geq 1$: $H_k$ is a $4k$-edge-connected $4k$-graph of order 60 without $4k-2$ pairwise disjoint 
perfect matchings.   
\end{lem}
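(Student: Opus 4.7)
The plan is to mimic closely the proof of Theorem \ref{teo:0mod4_general}, since $H_k$ is assembled from the very same building blocks ($T_k$ and $Q_k$), with one copy of $T_k$ on one side and a direct triangular gluing of the three $z_2$-ends on the other.

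First I would verify the basic parameters. Each vertex $x_i$ of $T_k$ has $2k$ internal edges plus $k$ edges to each of two vertices $z_1^j$ (with $j\neq i$), giving degree $4k$; each $z_1^i$ has $2k$ edges inside $Q_k^i$ (the multi-edge $u_1v_1$ was removed in the construction of $Q_k$) and $2k$ edges to $T_k$; each $z_2^i$ has $2k$ edges inside $Q_k^i$ plus $k$ edges to each of $z_2^{i+1}$ and $z_2^{i-1}$; all other vertices inherit their degree from $Q_k$. So $H_k$ is $4k$-regular, and its order is $3\cdot|V(Q_k)|+|V(T_k)|=3\cdot 19+3=60$, which is even.

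For $4k$-edge-connectivity I would follow the template of Lemma \ref{connectivity}: for any two vertices $y_1,y_2\in V(H_k)$, build $4k$ pairwise edge-disjoint $y_1y_2$-paths by chaining paths provided by Claim \ref{claim:edge-disj-paths} through the traversed subgraphs. The key structural observation is that between any two of the $Q_k^i$'s there are two natural routes---one through $T_k$, one through the $z_2$-triangle---each carrying $2k$ paths. A short case analysis (according to whether $y_1,y_2$ both lie in $T_k$, both in one $Q_k^i$, split between $T_k$ and some $Q_k^i$, or between two distinct $Q_k^i$'s) completes this step. Since $|V(H_k)|=60$ is even, the $4k$-graph property then follows immediately from $4k$-edge-connectivity.

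The core argument is showing that $H_k$ admits no $4k-2$ pairwise disjoint perfect matchings, and here I would recycle the parity trick from Theorem \ref{teo:0mod4_general}, now applied to the unique $T_k$ subgraph of $H_k$. Suppose for contradiction $\ca N=\{N_1,\dots,N_{4k-2}\}$ is such a family, and let $X_i$ denote the set of $2k$ edges between $T_k$ and $Q_k^i$. Lemma \ref{lem:Qk} applied to each $Q_k^i$ (with $V_k^1=X_i$ and $V_k^2$ the $z_2^i$-edges leaving to the triangle) yields $\omega(\phi(X_i))=2k-1$ for $i\in\{1,2,3\}$. Because the $\Z_2$-weight is additive mod $2$, we get $\omega(\phi(X_1)+\phi(X_2)+\phi(X_3))\equiv 3(2k-1)\equiv 1\pmod{2}$. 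On the other hand $\phi(X_1)+\phi(X_2)+\phi(X_3)=\phi(\partial_{H_k}(V(T_k)))$, and since $|V(T_k)|=3$ is odd, every $N_j$ crosses this cut in an odd number of edges; hence every coordinate of that vector equals $1$ and its weight equals $4k-2$, which is even---contradiction. I expect no genuinely new difficulty in carrying this out: the parity step is essentially the one already used for $S_k(G)$, and the main bookkeeping lies in the edge-connectivity case analysis, which is routine once one exploits the double-route structure provided by $T_k$ and the $z_2$-triangle.
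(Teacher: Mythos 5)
Your proposal is correct and follows the paper's strategy in all essentials: the same structural bookkeeping, the same reduction of edge-connectivity to the path-gluing machinery of Lemma \ref{connectivity} and Claim \ref{claim:edge-disj-paths}, and the same use of Lemma \ref{lem:Qk} combined with a parity count over three odd quantities to rule out $4k-2$ disjoint perfect matchings. The only (immaterial) differences are that the paper runs the parity argument on the triangle induced by $\{z_2^1,z_2^2,z_2^3\}$ rather than on the cut around $T_k$, and obtains edge-connectivity by relating $H_k$ to $S_k(K_2^3)$ instead of a direct case analysis.
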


\begin{proof}
Let $K_2^3$ be the unique (loopless) cubic graph on two vertices. $H_k$ is obtained from $S_k(K_2^3)$ by removing one $T_k$ and 
then connecting the vertices of degree $2k$ pairwise by $k$ (parallel) edges. Clearly, $H_k$ is $4k$-regular.
Note that $\{z_2^1, z_2^2, z_2^3\}$ induce a triangle $T$ in $H_k$ where any two vertices are connected by $k$ edges. 
Furthermore, for any $2k$ pairwise edge-disjoint paths which connect two vertices of $\{z_2^1, z_2^{2}, z_2^{3}\}$
in $S_k(K_2^3)$ and do not contain any edge of $S_k(K_2^3) - T_k$ there are $2k$ corresponding paths in $H_k$. Hence,
$H_k$ is $4k$-edge-connected.

Suppose to the contrary that $H_k$ has $4k-2$ pairwise disjoint perfect matchings $\ca N = \{N_1,\dots,N_{4k-2}\}$. 
By Lemma \ref{lem:Qk},
for each $i \in \{1,2,3\}$, there are $2k-1$ edges of $E(T)$ which intersect an element of $\ca N$ and which
are incident to $z_2^{i}$, a contradiction.   
\end{proof}

\begin{figure}
	\centering
	\includegraphics[scale=0.27]{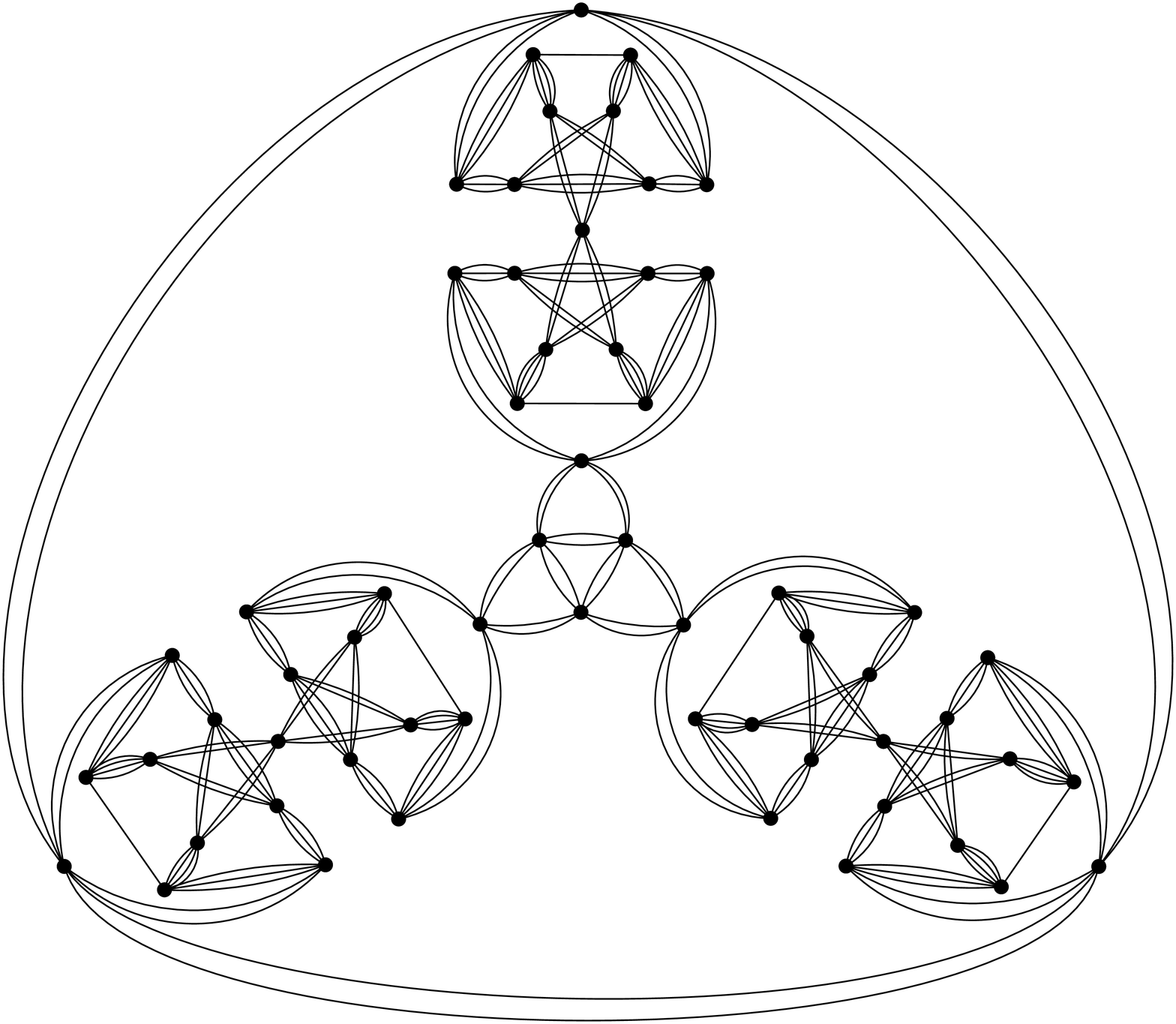}
	\caption{$H_2$ is an $8$-edge-connected $8$-graph on $60$ vertices without $6$ 
	pairwise disjoint perfect matchings.}\label{Fig:smallest}
\end{figure}

Next we will identify 4 pairwise disjoint matchings in $H_2$. These matchings will be used to complete the
proof of Theorem \ref{main1}.

Consider a copy of $Q_k$ inside a graph $G$, such that both $V_k^1$ and $V_k^2$ are non-empty. Let $M$ be a perfect matching of $G$. Then w.l.o.g. $|V_k^1\cap M|=1$ and $|V_k^2\cap M|=0$.
The unique perfect matching in $P_k=P_k^1+2ku_1^1v_1^1$ containing the edges of $M\cap E(P_k^1)$ is of type $0$ or $1$, suppose of type $0$. 
In the same way, the unique perfect matching in $P_k=P_k^2+2ku_1^2v_1^2$ containing the edges of $M\cap E(P_k^2)$ is of 
type $3$ or $4$, suppose of type $3$. In this case we say that $Q_k$ is of type $(0,3)$. For example, the bold perfect matching depicted in Figure \ref{Fig:4_p_matchings} is such that all $Q_k^i$s are of type $(0,4)$. We call $N_0$ such a perfect matching in $H_k$. Moreover, for $i\in\{1,2,3\}$, let $N_i$ be the perfect matching  of $H_{k}$ such that:
\begin{itemize}
		\item $Q_k^i$ is of type $(1,3)$;
		\item $Q_k^{i+1}$ is of type $(3,0)$;
		\item $Q_k^{i+2}$ is of type $(4,1)$;
	\end{itemize} where sums of indices are taken modulo $3$. In Figure \ref{Fig:4_p_matchings} $N_1$ is depicted using normal lines, $N_2$ is depicted using dotted lines and $N_3$ is depicted using dashed lines.

By construction of the perfect matchings $N_0,N_1,N_2, N_3$, the following lemma, which will be needed for the proof of Theorem \ref{main1},  follows.
\begin{lem}
	For all $k\ge1$: $ H_{k+1} = H_k + (N_0+N_1+N_2+N_3)$.
\end{lem}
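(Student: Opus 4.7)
The plan is to verify $H_{k+1} = H_k + (N_0 + N_1 + N_2 + N_3)$ by an edge-by-edge comparison of multiplicities. Since both sides are $4(k+1)$-regular multigraphs on the same $60$-vertex set, it is enough to show that the $4 \cdot 30 = 120$ edges in $N_0 \cup N_1 \cup N_2 \cup N_3$ coincide, with multiplicity, with the edges of $E(H_{k+1}) \setminus E(H_k)$. I would split the verification according to where each added edge lies: inside some copy $P_k^{j,i}$ (with $j \in \{1,2\}$, $i \in \{1,2,3\}$), inside $T_k$, inside the outer triangle $T$ on $\{z_2^1, z_2^2, z_2^3\}$, or between $T_k$ and a $Q$-copy.

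For the interior edges, the passage from $P_k$ to $P_{k+1}$ adds exactly one copy of each of $M_0, M_1, M_3, M_4$, omitting the parts on the removed multiedge $u_1 v_1$. By definition of type $(a, b)$, every $N_\ell$ restricts on each $P$-copy to a single $M_a$ (minus $u_1 v_1$ when $a \in \{0, 1\}$). Reading off the types---$N_0$ has every $Q_k^i$ of type $(0, 4)$, while $N_1, N_2, N_3$ rotate the triple of types $(1,3), (3,0), (4,1)$ cyclically through $Q_k^1, Q_k^2, Q_k^3$---one checks that on every $P_k^{j,i}$ the restrictions of $N_0, N_1, N_2, N_3$ comprise exactly the four matchings $M_0, M_1, M_3, M_4$ (each minus $u_1 v_1$ where applicable). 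This matches the interior contribution of $H_{k+1} - H_k$ on the nose.

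The external part of $H_{k+1} - H_k$ consists of three new edges of $T_k$, three new edges of $T$, and six new edges between $T_k$ and the $Q$-copies. The interior coverage fixes the uncovered-vertex set of each $N_\ell$: $N_0$ leaves $\{z_1^1, z_1^2, z_1^3, x_1, x_2, x_3\}$ uncovered, and for $\ell \in \{1, 2, 3\}$, $N_\ell$ leaves $\{z_1^\ell, z_2^{\ell+1}, z_2^{\ell+2}, x_1, x_2, x_3\}$ uncovered, with indices taken modulo $3$. Since no two $z_1^i$'s are adjacent, $N_0$ must use three bipartite edges $z_1^i x_j$; each $N_\ell$ with $\ell \ge 1$ is forced to use the triangle edge $z_2^{\ell+1} z_2^{\ell+2}$, one bipartite edge $z_1^\ell x_j$, and one edge of $T_k$ covering the two remaining $x_j$'s. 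As $\ell$ ranges over $\{1, 2, 3\}$, the three triangle edges of $T$ are thereby covered exactly once.

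The only real difficulty is joint consistency of the external choices: the three $T_k$-edges used by $N_1, N_2, N_3$ must be pairwise distinct, and the six bipartite pairs $(x_j, z_1^i)$ must be partitioned by $N_0 \cup N_1 \cup N_2 \cup N_3$. A finite case check produces exactly two valid configurations, mirror images of each other, and the construction of the $N_\ell$'s selected in Figure~\ref{Fig:4_p_matchings} picks one of them, completing the proof.
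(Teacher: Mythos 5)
Your verification is correct and follows the same route the paper intends: the paper offers no written proof, merely asserting that the identity "follows by construction of $N_0,N_1,N_2,N_3$," and your edge-by-edge accounting (interior restrictions giving exactly one copy of each of $M_0,M_1,M_3,M_4$ per Petersen half via the type bookkeeping, plus the forced external edges on $\{z_1^i\}$, $T_k$ and the triangle $T$) is precisely the omitted check. The one step you defer to a finite case analysis --- that the bipartite choices of $N_0$ and the $T_k$-edges of $N_1,N_2,N_3$ are jointly consistent --- does work out (with $N_0$ matching $z_1^i$ to $x_{i+1}$, each $N_\ell$ is forced to use $z_1^\ell x_{\ell+2}$ and $x_\ell x_{\ell+1}$, which are pairwise distinct), and is exactly what Figure~\ref{Fig:4_p_matchings} encodes.
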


\begin{lem}\label{teo:1_3mod4}
For all $t\ge1$, there is a $2t$-edge-connected $(2t+1)$-graph on 60 vertices without $2t-1$ pairwise disjoint perfect matchings.
\end{lem}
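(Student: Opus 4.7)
My plan is to take $G := H_k - N_0$ when $t = 2k-1$ is odd (so $r := 2t+1 = 4k-1$), and $G := H_k + N_0$ (i.e.\ $H_k$ with an additional copy $N_0'$ of the matching $N_0$) when $t = 2k$ is even (so $r = 4k+1$); in either case $G$ is $(2t+1)$-regular on $60$ vertices by construction. The $(2t+1)$-graph property will then follow from $2t$-edge-connectivity together with a parity argument: since $2t+1$ is odd, every odd cut of $G$ has odd size, so the lower bound $|\partial_G(S)| \geq 2t$ automatically sharpens to $\geq 2t+1$. In the even subcase, $2t$-edge-connectivity is immediate because $H_k$ is $4k$-edge-connected by Lemma \ref{teo:0mod4} and adjoining edges cannot decrease it. In the odd subcase, I would enumerate the tight $4k$-cuts of $H_k$---the singletons, the subsets $V(Q_k^j)$, the ``halves'' $V(P_k^{j,\ell}) \setminus \{u_{Q_k^j}\}$ inside each $Q_k^j$, and the pairs $\{u,v\}$ joined by a multi-edge of multiplicity $2k$---and verify in each case, using the explicit type-$(0,4)$ description of $N_0$, that $|\partial_{H_k}(S) \cap N_0| \leq 2$. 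For example, on $V(Q_k^j)$ the type description forces $z_1^j$ to be matched externally by $N_0$ and $z_2^j$ internally, so exactly one $N_0$-edge crosses this cut.

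For the non-existence of $r - 2 = 2t - 1$ pairwise disjoint perfect matchings, the odd subcase is essentially immediate: any family of $4k-3$ pairwise disjoint pms of $G = H_k - N_0$, together with the removed matching $N_0$ (disjoint from $E(G)$), would yield $4k - 2$ pairwise disjoint pms of $H_k$, contradicting Lemma \ref{teo:0mod4}. In the even subcase, suppose $G = H_k + N_0'$ admits pairwise disjoint pms $M_1, \dots, M_{4k-1}$, and for each $i$ let $\tilde M_i$ be the pm of $H_k$ obtained from $M_i$ by replacing every $N_0'$-edge by its $H_k$-counterpart in $N_0$. Two modified matchings $\tilde M_i$ and $\tilde M_j$ can share an edge only at an $N_0$-edge whose two copies in $G$ are ``split'' between $M_i$ and $M_j$; in the no-conflict scenario, the $\tilde M_i$'s are $4k-1 > 4k-2$ pairwise disjoint pms of $H_k$, again contradicting Lemma \ref{teo:0mod4}.

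I expect the main obstacle to be the conflict case in the even subcase. To resolve it, I would apply Lemma \ref{lem:Qk} suitably adapted to the degrees $d_G(z_1^j) = d_G(z_2^j) = 4k+1$ within each $Q_k^j$ in $G$: since $|V(Q_k^j)| = 19$ is odd, parity forces each $M_i$ to cross $\partial_G(V(Q_k^j))$ in exactly one edge, at either $z_1^j$ or $z_2^j$, and Lemma \ref{lem:P+matchings} applied to the two halves $P_k^{j,1}$ and $P_k^{j,2}$ should strongly constrain how each $M_i$ can intersect $Q_k^j$. The goal is to use these constraints to rearrange the conflict edges and extract $4k-2$ genuinely pairwise disjoint pms of $H_k$, once more contradicting Lemma \ref{teo:0mod4}.
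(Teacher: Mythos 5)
There are genuine gaps in both subcases, and in each the paper's proof goes in the opposite direction from yours: it only ever \emph{adds} matchings to $H_k$ and then reads the result as $H_{k+1}$ minus the complementary matchings, via the identity $H_{k+1}=H_k+(N_0+N_1+N_2+N_3)$. In your even subcase the construction $H_k+N_0$ coincides with the paper's, but your non-existence argument projects $4k-1$ disjoint perfect matchings of $H_k+N_0'$ down to $H_k$ and then must repair the resulting conflicts; you correctly identify this as the main obstacle and only state a goal for resolving it, so the proof is incomplete at its crucial point. The paper avoids the issue entirely: since $H_k+N_0=H_{k+1}-(N_1+N_2+N_3)$, any $4k-1$ pairwise disjoint perfect matchings of $H_k+N_0$ together with the three removed (parallel) copies of $N_1,N_2,N_3$ form $4(k+1)-2$ pairwise disjoint perfect matchings of $H_{k+1}$, contradicting Lemma \ref{teo:0mod4} at level $k+1$; no projection and no conflict case are needed.

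Your odd subcase is where the approach really breaks. Taking $G=H_k-N_0$ makes the matching part trivial (as you observe), but it shifts all the difficulty into the edge-connectivity, and there the plan fails. Checking only the tight $4k$-cuts of $H_k$ is not enough: a cut of size $4k+2s$ may meet $N_0$ in far more than $2s+2$ edges, so every cut must be controlled. Worse, for $k=1$ the claim is provably false: $H_1-N_0$ is cubic, and if it were bridgeless then Petersen's theorem would give it a perfect matching $M$ with $M\cap N_0=\emptyset$, so $\{M,N_0\}$ would be two disjoint perfect matchings of $H_1$, contradicting Lemma \ref{teo:0mod4}. Hence $H_1-N_0$ has a bridge and is not even $2$-edge-connected, which means some cut of $H_1$ meets $N_0$ in all but one of its edges, refuting the ``$\le 2$'' bound for that cut. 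The paper instead sets $H_k'=H_k+(N_0+N_1+N_2)$: the added edges form a connected $3$-edge-colorable (hence bridgeless) cubic spanning subgraph, so every cut gains at least $2$ (at least $3$ if odd), giving the $(4k+2)$-edge-connected $(4k+3)$-graph immediately, and the non-existence of $4k+1$ disjoint perfect matchings again follows by adding back $N_3$ and invoking Lemma \ref{teo:0mod4} for $H_{k+1}$. (Note also that for $t=1$ the statement cannot hold at all, since every $3$-graph has a perfect matching; the paper's proof quietly starts at $t\ge 2$, while your parametrization explicitly attempts $t=1$ and necessarily fails there.)
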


\begin{proof}
Case 1: $t=2k+1$ for a $k \geq 1$. Let $H_k'=H_k+(N_0+N_1+N_2)$. Since the graph $\tilde{H}=H_k[N_0+N_1+N_2]$ is a $3$-edge-colorable connected cubic graph, we have that for all $X\subseteq V(\tilde{H})$, $|\partial_{\tilde{H}}(X)|\ge3$, if $X$ is odd and $|\partial_{\tilde{H}}(X)|\ge2$, if $X$ is even. Then $H_k'$ is $(4k+2)$-edge-connected $(4k+3)$-graph. From the equality $H_k' = H_{k+1}-N_3$ we deduce that it has no $4k+1$ pairwise disjoint perfect matchings.

Case 2: $t = 2k$ for a $k \geq 1$. The graph $H_k''=H_k+N_0$ is a $4k$-edge-connected $(4k+1)$-graph. Since $H_k'' = H_{k+1}-(N_1+N_2+N_3)$, it follows that $H''_k$ has no $4k-1$ pairwise disjoint perfect matchings.
\end{proof}

\begin{figure}
	\centering
	\includegraphics[scale=0.28]{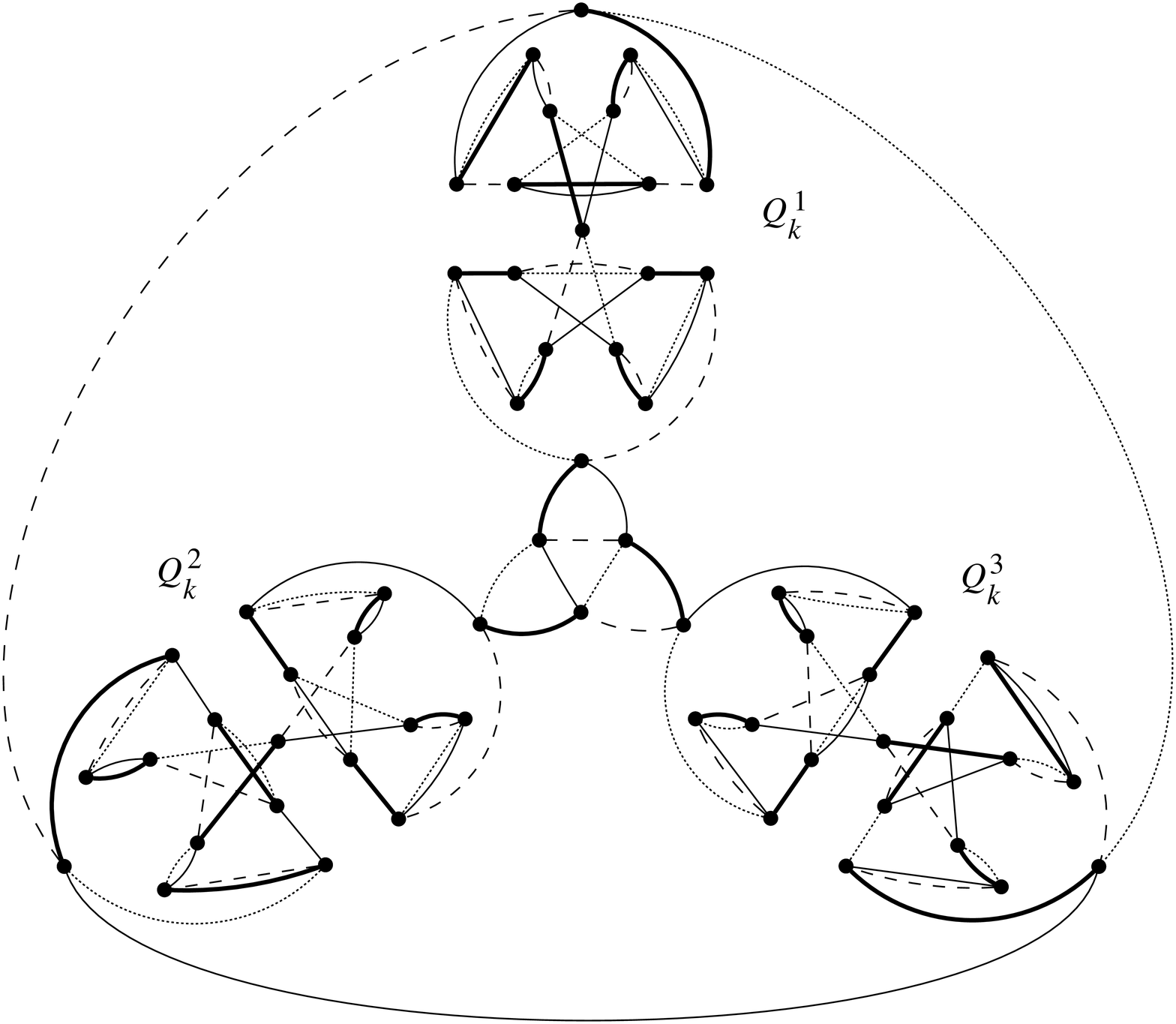}
	\caption{Four pairwise disjoint perfect matchings in $H_k$.}\label{Fig:4_p_matchings}
\end{figure}

\begin{lem}\label{teo:2mod4}
For all $k\ge1$, there is a $4k$-edge-connected $(4k+2)$-graph on 60 vertices without $4k$ pairwise disjoint perfect matchings.
\end{lem}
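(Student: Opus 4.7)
The plan is to follow the template used for Lemma~\ref{teo:1_3mod4}: add two of the four perfect matchings $N_0,N_1,N_2,N_3$ of $H_k$ to bump the regularity from $4k$ to $4k+2$. Concretely, I would set $H_k''' := H_k+(N_0+N_1)$. By construction this graph is $(4k+2)$-regular and still has $60$ vertices.

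For the edge-connectivity I would simply observe that $H_k$ is a spanning subgraph of $H_k'''$ and is already $4k$-edge-connected by Lemma~\ref{teo:0mod4}, so adding edges preserves this and $H_k'''$ is also $4k$-edge-connected. To verify the $r$-graph condition with $r=4k+2$, I would use that $H_k$ is a $4k$-graph, hence $|\partial_{H_k}(X)|\ge 4k$ for every odd $X$, and that each of the two added perfect matchings meets $\partial(X)$ in an odd (hence positive) number of edges, giving $|\partial_{H_k'''}(X)|\ge 4k+2$.

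The crux is ruling out $4k$ pairwise disjoint perfect matchings, and here I would use the identity $H_{k+1} = H_k'''+(N_2+N_3)$, which is immediate from the preceding lemma $H_{k+1}=H_k+(N_0+N_1+N_2+N_3)$. If $L_1,\dots,L_{4k}$ were pairwise disjoint perfect matchings in $H_k'''$, then together with the fresh copies of $N_2$ and $N_3$ adjoined to form $H_{k+1}$ (which, being newly added parallel edges, are disjoint from $E(H_k''')$ and from each other), they would yield $4k+2$ pairwise disjoint perfect matchings in $H_{k+1}$. But $H_{k+1}$ is a $(4k+4)$-graph, so Lemma~\ref{teo:0mod4} applied at $k+1$ forbids $4(k+1)-2=4k+2$ pairwise disjoint perfect matchings, a contradiction.

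There is no substantial obstacle here — the whole argument is a one-line reduction to the already established Lemma~\ref{teo:0mod4} at index $k+1$, exactly mirroring how Case~1 and Case~2 of Lemma~\ref{teo:1_3mod4} were handled. The only step requiring any genuine thought is the verification of the $r$-graph condition, handled above via the parity of intersections between perfect matchings and odd cuts.
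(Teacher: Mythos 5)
Your proposal is correct and is essentially the paper's own argument: the paper also takes $H_k'''=H_k+N_0+N_1$ and rules out $4k$ pairwise disjoint perfect matchings via the identity $H_k'''=H_{k+1}-(N_2+N_3)$, i.e.\ the same reduction to Lemma~\ref{teo:0mod4} at index $k+1$ that you spell out. Your verifications of the edge-connectivity and the $(4k+2)$-graph condition are just more explicit versions of what the paper leaves implicit.
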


\begin{proof}
	The graph $H'''_k=H_k+N_0+N_1$ is a $4k$-edge-connected $(4k+2)$-graph. It has no $4k$ pairwise disjoint perfect matchings because $H'''_k=H_{k+1}-(N_2+N_3)$.	
\end{proof}

Theorem \ref{main1} now follows from Lemmas \ref{teo:0mod4}, \ref{teo:1_3mod4}, and \ref{teo:2mod4}. 
It remains to construct simple graphs with the desired property and to show how to expand vertices.  

\subsection{Simple graphs}

Let $v$ be a vertex of a graph $G$ such that $d_G(v)=t$. Moreover let $v_1,\dots,v_t$ be the not necessarily distinct neighbors of $v$ and $u_1,\dots,u_t$ be the vertices of degree $t-1$ of $K_{t,t-1}$. The Meredith extension \cite{Meredith} applied to $G$ at $v$ produces the graph $G_v$ obtained from $G-v$ and a copy of the complete graph $K_{t,t-1}$ by adding all edges $v_iu_i$, for $i\in\{1,\dots,t\}$. Notice that $G$ is $t$-edge-connected if and only if $G_v$ is $t$-edge-connected. Furthermore, it is easy to see that 
for $t \geq 2$, 
$G$ does not have $t$ pairwise disjoint perfect matchings if and only if $G_v$ does not have $t$ pairwise disjoint perfect matchings. 

Let $\ca V \subset V(H_k)$ be a vertex cover of $H_k$. If Meredith extension is applied on every vertex of $\ca V$, then
we obtain simple $r$-edge-connected $r$-graphs without $r-2$ pairwise disjoint perfect matchings.   
In particular, there is a vertex cover $\ca V$ of $H_k$ such that $|\ca V|=35$. Thus,
expanding the vertices of $\ca V$ at the graphs of Lemmas \ref{teo:0mod4}, \ref{teo:1_3mod4}, and \ref{teo:2mod4} 
yields simple $t$-edge-connected $r$-graphs of order $70(r - 1)$ with the desired properties. Repeated application of Meredith extension yields infinite families of such graphs.

\end{document}